\documentclass[10pt,twoside]{amsart}

\usepackage{color}
\usepackage{amsthm}
\usepackage{amssymb}
\usepackage{amsmath}
\usepackage{mathrsfs}
\usepackage{mathtools}
\usepackage{graphicx}
\usepackage{bm}
\usepackage{dsfont}
\usepackage{verbatimbox}
\usepackage{mathdots}
\usepackage{enumitem}
\usepackage{booktabs}
\usepackage[labelsep=colon,labelfont={color=ver}]{caption}
\usepackage{hyperref}
\usepackage{comment}

\theoremstyle{plain}
\newtheorem{thm}{Theorem}

\newtheorem{prop}[thm]{Proposition}

\theoremstyle{definition}
\newtheorem{defn}{Definition}

\DeclareMathOperator{\wt}{\mathbf{wt}}

\setlength{\floatsep}{10pt plus 3pt minus 2pt}
\setlength{\textheight}{21cm} \setlength{\textwidth}{34pc}
\setlength{\evensidemargin}{0.7cm} \setlength{\oddsidemargin}{0.7cm}
\setlength{\voffset}{0.cm}

\definecolor{referencias}{rgb}{0.1,0.5,0.2}
\definecolor{citation}{rgb}{.9,0.1,0}
\definecolor{ver}{rgb}{0.0, 0.42, 0.24}
\definecolor{applegreen}{rgb}{0.55, 0.71, 0.0}

\definecolor{azul}{rgb}{.204,.353,.541}
\definecolor{azulclaro}{rgb}{.31,.506,.741}
\definecolor{verde}{rgb}{0.1,0.5,0.2}
\definecolor{rojo}{rgb}{.9,0.1,0}
\definecolor{lightgray}{gray}{0.8}
\definecolor{gris}{gray}{0.3}
\definecolor{grissuave}{gray}{0.5}

\definecolor{light-gray}{gray}{0.92}
\definecolor{MyGray}{rgb}{0.90,0.90,0.90}
\definecolor{lgray}{gray}{0.85}
\definecolor{mgray}{gray}{0.7}
\definecolor{myred}{RGB}{151,20,20}
\definecolor{myblue}{rgb}{0.2,0.2,0.7}

\makeatletter
\makeatother

\makeatletter
\makeatother

\def    \N      {\mathds{N}}

\newcommand{\dsum}{\displaystyle\sum}

\newcommand{\ra}[1]{\renewcommand{\arraystretch}{#1}}
\newcommand{\tifrac}[2]{\mbox{\tiny$\frac{#1}{#2}$}}

\title{A Lucas analogue of Eulerian numbers}
\author[J.\ Agapito Ruiz]{Jos\'e Agapito Ruiz}
\address{\parbox{\linewidth}{Centro de an\'alise funcional, estruturas lineares e aplica\c{c}\~oes, faculdade de ci\^encias, \\ universidade de lisboa, 1749-016 Lisboa, portugal\\[1ex]}}
\email{jaruiz@ciencias.ulisboa.pt}
\thanks{2020 \emph{Mathematics Subject Classification.}
Primary 03, 05, 11}
\keywords{Eulerian numbers, Lucas analogues, Recursive formulas}

\begin{document}

\begin{abstract} The generalized Lucas numbers are polynomials in two variables with nonnegative integer coefficients. Lucas versions of some combinatorial numbers with known formulas in terms of quotient and products of nonnegative integers have been recently given by replacing the integers in those formulas with their corresponding Lucas analogues. We instead use a recursive approach. In this sense, we give a recursive formula for Lucas-Narayana numbers derived from a recent formula in terms of Lucasnomials (the explicit Lucas version of binomial numbers). We propose a recursive definition for a Lucas analogue of the classical Eulerian numbers, which shows immediately that they are polynomials in two variables with nonnegative integer coefficients. We prove that they are palindromic like their standard counterparts. The recursive approach allows  us to give Lucas analogues of many relevant combinatorial constants. In particular, Lucas versions for both Stirling numbers of the second kind and Motzkin numbers are presented. 
\end{abstract}

\maketitle

\section{Introduction}
\label{se:1} 

The generalized Lucas \emph{numbers} are defined recursively by $\{0\}=0$, $\{1\}=1$, and 
\begin{equation}\label{eq:Lucas-seq}
\{n\} = s\{n-1\}+t\{n-2\}\,,
\end{equation}

\noindent for any integer $n\ge 2$. It follows immediately from \eqref{eq:Lucas-seq} that the generalized Lucas numbers $\{n\}$ are polynomials in $s$ and $t$ with nonnegative integer coefficients. On specialization of $s$ and $t$ one can recover, for example, the nonnegative integers ($s=2$, $t=-1$), the $q$-integers $[n]_q=\frac{q^n-1}{q-1}=1+q+\cdots+q^{n-1}$ ($s=1+q$, $t=-q$), the Chebyshev polynomials of the second kind $U_{n-1}(x)$ ($U_0(x)=1$, $U_1(x)=2x$, $s=2x$, $t=-1$), and the Fibonacci sequence $F_n$ ($s=t=1$), which is at the core of recursion~\eqref{eq:Lucas-seq}. Here are a few instances of the generalized Lucas numbers:
\begin{equation*}
\{2\} = s,\quad \{3\}=s^2+t,\quad \{4\}=s^3+2st,\quad \{5\}=s^4+3s^2t+t^2,\quad \{6\}= s^5+4s^3t+3st^2\,.
\end{equation*}

A useful combinatorial interpretation of $\{n\}$ is derived from the standard interpretation of Fibonacci numbers  via tiling. In this regard, we consider two types of tiles: monominoes (which cover one unit squares) and dominoes (which cover two unit squares), so that for a given configuration $T$ using these tiles, we define its \emph{weight} to be 
\begin{equation}\label{eq:weight_T}
\wt T=s^{\text{$\#$ of monominoes in }T}\,\, t^{\text{$\#$ of dominoes in }T}\,.
\end{equation}
 Furhermore, for any set $\mathscr{T}$ of tilings of this kind, the \emph{weight} of $\mathscr{T}$ is defined to be 
\begin{equation}\label{eq:weightTcoll}
\wt\mathscr{T} = \dsum_{T\in\mathscr{T}} \wt T\,.
\end{equation}
Clearly, $\wt\mathscr{T}$ is a polynomial in $s$ and $t$ when $\mathscr{T}$ is finite, otherwise it is a formal power series. 

\begin{figure}[ht]
\centering
\includegraphics[scale=0.8]{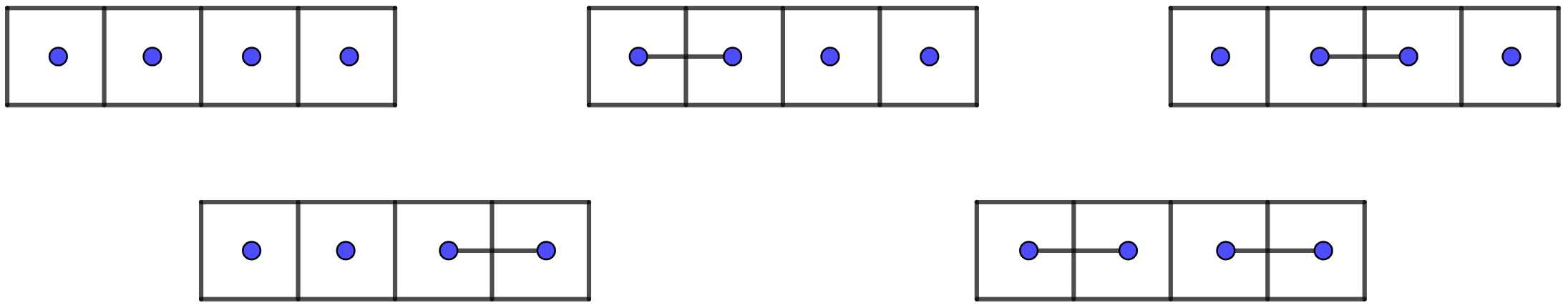}
\caption{The tilings in $\mathscr{T}(4)$}
\label{fi:tiling}
\end{figure}

Let $\mathscr{T}(n)$ be the set of all tilings of a row of $n$ squares. Figure~\ref{fi:tiling} shows all possible tilings for $n=4$. It is then easy to see by induction that for every $n\ge 1$, we have
\begin{equation*}
 \{n\} = \wt\mathscr{T}(n-1)\,.
 \end{equation*}
 \noindent The cardinality of $\mathscr{T}(n-1)$ is precisely the Fibonacci number $F_n$ (clearly, for $\mathscr{T}(0)=\emptyset$ we assume $\vert\mathscr{T}(0)\vert=1$). Now, by tiling rows with a decreasing number of squares and piling them up we can make sense of the expression 
\begin{equation}\label{eq:Lucastorial}
\{n\}!=\{n\}\{n-1\}\cdots\{2\}\{1\}\,.
\end{equation}
 
\noindent This is the Lucas analogue of the factorial, the \emph{Lucastorial} number. By convention $\{0\}!=1$. Using the setting of Young diagrams, Identity~\eqref{eq:Lucastorial} gives the weight of the set of all tilings of the standard staircase partition $\delta_n=(n-1,n-2,\ldots,1)$; namely 
\begin{equation*}
\wt\mathscr{T}(\delta_n)=\{n\}!
\end{equation*}

\noindent Likewise, one can obtain a Lucas version of any expression given in terms of products and quotients of nonnegative integers by simply replacing each ocurrence of $n$ in the expression with $\{n\}$. For instance, given $0\le k\le n$, the \emph{Lucasnomial} is defined by
 \begin{equation}\label{eq:Lucasnomials}
 \begin{Bmatrix}
 n \\
 k
 \end{Bmatrix} = \frac{\{n\}!}{\{k\}!\{n-k\}!}\,.
 \end{equation}
 
\noindent From this point of view, Lucasnomials are rational expressions in $s$ and $t$ a priori. However they are indeed  polynomials in $s$ and $t$ with nonnegative integer coefficients. This follows from some combinatorial interpretations given by Gessel and Viennot~\cite{GV_85}, Benjamin and Plott~\cite{BP_08}, and Sagan and Savage \cite{SaSa_10}). More recently, using a model that involves  lattice paths inside tilings of Young diagrams, Bennett et al.\cite{Bennett_et_al_20} have given another combinatorial interpretation for Lucasnomials that not only is simpler than the previous ones but also is more flexible for combinatorial proofs of identities involving Lucasnomials. Their combinatorial interpretation is also extendable to other familiar sequences of numbers like the Catalan numbers and their relatives. It is worth mentioning that Amdeberhan et.~al.~\cite{Amdeberhan_et_al_14} have also studied the generalized Lucas sequence and the Lucas analogues of Binomial and Catalan numbers from an algebraic and number theoretical perspective. 

In general, given any array of numbers $A_{n,k}$, its Lucas analogue will be denoted by $A_{\{n,k\}}$. An alternative natural way to obtain Lucas analogues of standard arrays of numbers is by generalizing their recursive relations in a straightforward manner. For instance, it is well-known that the following formula holds for binomial numbers $B_{n,k}=\binom{n}{k}$,
\begin{equation}\label{eq:Binom-rec1}
B_{n,k}=B_{n-1,k}+B_{n-1,k-1}\,.
\end{equation}
However, since $\{1\}=1$, we do not get anything new by replacing $1$ with $\{1\}$ in \eqref{eq:Binom-rec1}. Therefore, we have to choose a different recursive relation. In this regard, there is a less familiar recursive formula for binomial numbers that we can work with; that is 
\begin{equation}\label{eq:Binom-rec2}
B_{n,k}=(k+1)B_{n-1,k}-(n-k-1)B_{n-1,k-1}\,.
\end{equation}
Keeping in mind that $\{n\}=n$ when $s=2$ and $t=-1$, a natural way of defining the Lucas analogue of \eqref{eq:Binom-rec2} is
\begin{equation}\label{eq:Lucas-Binom-rec}
B_{\{n,k\}} = \{k+1\} B_{\{n-1,k\}} + t\{n-k-1\} B_{\{n-1,k-1\}}\,.
\end{equation}
An immediate conclusion from \eqref{eq:Lucas-Binom-rec} is that $B_{\{n,k\}}$
is a polynomial in $s$ and $t$ with nonnegative integer coefficients. Moreover, using as initial conditions $B_{\{n,0\}}=B_{\{n,n\}}=1$ for $n\ge 0$ and $B_{\{n,k\}}=0$ whenever $n<k$ or $k<0$, we can recover by induction the formula 
\begin{equation*}
B_{\{n,k\}} = \frac{\{n\}!}{\{k\}!\{n-k\}!} \,.
\end{equation*}
Hence, the Lucas version for binomial numbers obtained from generalizing the recursive relation~\eqref{eq:Binom-rec2} agrees with the explicit formula for Lucasnomials.  

Another familiar array of nonnegative integers with combinatorial interpretations is the array of Narayana numbers, defined for $1\le k\le n$ as
\begin{equation}\label{eq:Na}
N_{n,k} = \frac{1}{n}\binom{n}{k}\binom{n}{k-1}\,.
\end{equation}
The corresponding Lucas-Narayana numbers are then given by 
\begin{equation}\label{eq:LuNa}
N_{\{n,k\}} =\frac{1}{\{n\}}\begin{Bmatrix}n \\ k\end{Bmatrix}\begin{Bmatrix}n \\ k-1\end{Bmatrix}\,.
\end{equation}

\noindent The fact that Lucas-Narayana numbers are indeed polynomials in $s$ and $t$ with nonnegative integer coefficients was conjectured in~\cite{Bennett_et_al_20}, and later proved by Sagan and Tirrell~\cite{ST_20} using special factoring properties of $\{n\}$. Recently, Garret and Killpatrick~\cite{GK_22} 
gave a combinatorial proof of this fact by showing that the Lucas-Narayana numbers $N_{\{n,k\}}$ satisfy the formula
\begin{equation}\label{eq:LuNa-bin}
N_{\{n,k\}} = \begin{Bmatrix}n-1 \\ k-1\end{Bmatrix}^2+t\begin{Bmatrix}n-1\\ k\end{Bmatrix}\begin{Bmatrix}n-1 \\ k-2\end{Bmatrix}\,,
\end{equation}

\noindent for $2\le k\le n-1$ and $n\ge 1$. Formula~\eqref{eq:LuNa-bin} shows that $N_{\{n,k\}}$ is a polynomial in $s$ and $t$ with nonnegative integer coefficients because it is expressed as a sum of products of Lucasnomials with coefficients in $\N[s,t]$. Now, manipulating Formula~\eqref{eq:LuNa-bin} algebraically,  we get a recursive formula for Lucas-Narayana numbers. 

\begin{prop}
\label{pr:1}
For any integers $2\le k\le n-1$ and $n\ge 1$, the Lucas-Narayana numbers are given by
\begin{equation}\label{eq:LuNa-rec}
N_{\{n,k\}} = \frac{\{k\}\{n-1\}}{\{n-k\}} \, N_{\{n-1,k\}} + t\,\frac{\{n-k\}\{n-1\}}{\{k\}}  N_{\{n-1,k-1\}}\,,
\end{equation}
with initial conditions $N_{\{0,0\}}=1$, $N_{\{n,k\}}=1$ for $n=k$ or $k=1$, $N_{\{j,0\}}=0$ for $j>0$, and $N_{\{n,k\}}=0$ for $n<k$.
\end{prop}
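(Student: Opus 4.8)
The plan is to derive the recursion \eqref{eq:LuNa-rec} directly from the identity \eqref{eq:LuNa-bin} by rewriting each of its two summands in terms of the Lucas-Narayana numbers one row up. The only auxiliary fact I need is the elementary quotient identity for adjacent Lucasnomials, which follows at once from the definition \eqref{eq:Lucasnomials} together with $\{j\}! = \{j\}\{j-1\}!$: for $1\le j\le m$,
\begin{equation*}
\frac{\begin{Bmatrix} m \\ j-1 \end{Bmatrix}}{\begin{Bmatrix} m \\ j \end{Bmatrix}} = \frac{\{j\}}{\{m-j+1\}}\,.
\end{equation*}
Specializing to $m=n-1$ and $j=k$ gives $\begin{Bmatrix} n-1 \\ k-1 \end{Bmatrix} = \tfrac{\{k\}}{\{n-k\}}\begin{Bmatrix} n-1 \\ k \end{Bmatrix}$, and this single relation is what collapses everything.

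Next I would verify the two coefficient identities by \emph{forward} substitution rather than by dividing through by the Narayana numbers, so that no nonvanishing hypothesis is needed. Using the explicit formula \eqref{eq:LuNa}, namely $N_{\{n-1,k\}} = \tfrac{1}{\{n-1\}}\begin{Bmatrix} n-1 \\ k \end{Bmatrix}\begin{Bmatrix} n-1 \\ k-1 \end{Bmatrix}$, the factor $\{n-1\}$ cancels and, after replacing the copy of $\begin{Bmatrix} n-1 \\ k \end{Bmatrix}$ via the quotient identity, I obtain
\begin{equation*}
\frac{\{k\}\{n-1\}}{\{n-k\}}\, N_{\{n-1,k\}} = \begin{Bmatrix} n-1 \\ k-1 \end{Bmatrix}^2\,.
\end{equation*}
The same computation applied to $N_{\{n-1,k-1\}} = \tfrac{1}{\{n-1\}}\begin{Bmatrix} n-1 \\ k-1 \end{Bmatrix}\begin{Bmatrix} n-1 \\ k-2 \end{Bmatrix}$, this time replacing the copy of $\begin{Bmatrix} n-1 \\ k-1 \end{Bmatrix}$, yields
\begin{equation*}
t\,\frac{\{n-k\}\{n-1\}}{\{k\}}\, N_{\{n-1,k-1\}} = t\begin{Bmatrix} n-1 \\ k \end{Bmatrix}\begin{Bmatrix} n-1 \\ k-2 \end{Bmatrix}\,.
\end{equation*}
Adding these two displays and invoking \eqref{eq:LuNa-bin} gives \eqref{eq:LuNa-rec} immediately.

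Finally I would dispose of the boundary data separately, since \eqref{eq:LuNa-bin} is only asserted for $2\le k\le n-1$. The stated values $N_{\{0,0\}}=1$, $N_{\{n,k\}}=1$ for $n=k$ or $k=1$, $N_{\{j,0\}}=0$ for $j>0$, and $N_{\{n,k\}}=0$ for $n<k$ can be read off directly from the defining formula \eqref{eq:LuNa} together with the conventions $\{0\}!=1$ and $\begin{Bmatrix} n \\ k \end{Bmatrix}=0$ for $k<0$ or $k>n$; these are exactly what is needed to seed the recursion and make it well posed.

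I do not anticipate a genuine obstacle here: the heart of the argument is a one-line algebraic rearrangement once the Lucasnomial quotient identity is in hand. The only points requiring care are the bookkeeping of the index shifts $k\mapsto k-1$ and the placement of the weight $t$, and checking that the denominators $\{n-k\}$ and $\{k\}$ do not vanish over the range $2\le k\le n-1$, which holds since $\{m\}$ is a nonzero polynomial for every $m\ge 1$.
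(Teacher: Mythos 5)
Your proposal is correct and follows essentially the same route as the paper, which obtains \eqref{eq:LuNa-rec} precisely by algebraic manipulation of the Garret--Killpatrick identity \eqref{eq:LuNa-bin}; your adjacent-Lucasnomial quotient identity simply makes explicit the manipulation the paper leaves implicit. The forward-substitution verification of the two coefficient identities and the separate check of the boundary values are both sound (the only cosmetic caveat being that $N_{\{0,0\}}=1$ is a convention rather than a value of \eqref{eq:LuNa}, since $\{0\}=0$).
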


\noindent Formula \eqref{eq:LuNa-rec} is the Lucas analogue of the not well-known recurrence relation satisfied by the classical Narayana numbers,
\begin{equation*}\label{eq:Na-rec2}
N_{n,k} = \frac{k(n-1)}{n-k}N_{n-1,k} - \frac{(n-k)(n-1)}{k} N_{n-1,k-1}\quad .
\end{equation*}

\noindent We can recover from Proposition~\ref{pr:1} the closed formula~\eqref{eq:LuNa}, by using induction again. Note that the polynomial nature of $N_{\{n,k\}}$ with nonnegative integer coefficients is not immediate from the recursive formula~\eqref{eq:LuNa-rec} this time.

These two examples illustrate the elementary approach we will take for the rest of the paper. We show in Section~\ref{se:2} that a straightforward Lucas version of a recursive relation satisfied by the classical Eulerian numbers $E_{n,k}$ gives a way of defining the Lucas-Eulerian numbers $E_{\{n,k\}}$. It follows immediately from the definition that they are polynomials in $s$ and $t$ with nonnegative integer coefficients.  We show that they are palindromic. Using the recursive definition of $E_{\{n,k\}}$, we get a formula for $E_{\{n,1\}}$ that encourage us to conjecture a similar formula for the general case $E_{\{n,k\}}$. In Section~\ref{se:3}, we elaborate on the difficulties that arise when trying to give a Lucas analogue of a known alternating sum formula for Eulerian numbers that be compatible with our Lucas-Eulerian numbers. We end in  Section~\ref{se:4} by making further comments on open problems and on Lucas analogues of Stirling numbers of the second kind and Motzkin numbers.

\section{Lucas-Eulerian numbers}
\label{se:2}

Among several equivalent variants of a known recurrence relation satisfied by the classical Eulerian numbers, we have the following formula
\begin{equation}\label{eq:Eul-rec}
E_{n,k} = (k+1) E_{n-1,k} + (n-k+1) E_{n-1,k-1}\,.
\end{equation}

\noindent A straightforward Lucas analogue of Formula~\ref{eq:Eul-rec} defines the Lucas-Eulerian numbers. 

\begin{defn} \label{de:LuEul}The Lucas-Eulerian numbers are defined recursively by the formula
\begin{equation}\label{eq:LuEul-rec}
E_{\{n,k\}} = \{k+1\}E_{\{n-1,k\}} + \{n-k+1\}E_{\{n-1,k-1\}}\,,
\end{equation}
for any integers $0 \le k \le n$, with initial conditions  $E_{\{n,0\}}=E_{\{n,n\}}=1$ for $n\ge 0$ and $E_{\{n,k\}}=0$ for $n<k$ or $k<0$. 
\end{defn}

\begin{table}[h]
\tiny
\begin{minipage}{\textwidth}
\centering
\ra{1.2}
\begin{tabular}{c c c c c c p{0.8cm} p{0.8cm}}
\toprule
 $\bm{n\backslash k}$ & ${\color{red}0}$ & ${\color{red}1}$ & ${\color{red}2}$ & ${\color{red}3}$ & ${\color{red}4}$ & ${\color{red}5}$  \\
\midrule
\color{red}0 & 1 & 0 & 0 & 0 & 0 & 0 \\
\color{red} 1 & 1 & 1 & 0 & 0 & 0 & 0 \\
\color{red}2 & 1 & $2s$ & 1 & 0 & 0 & 0 \\
\color{red}3 & 1 & $3s^2+t$ & $3s^2+t$ & 1 & 0 & 0 \\
\color{red}4 & 1 & $4s^3+3st$ & $6s^4+8s^2t+2t^2$ & $4s^3+3st$ & 1 & 0 \\
\color{red}5 & 1 & $5s^4+6s^2t+t^2$ & $10s^6+25s^4t+16s^2t^2+2t^3$ & $10s^6+25s^4t+16s^2t^2+2t^3$ & $5s^4+6s^2t+t^2$ & 1 \\
\bottomrule
\end{tabular}
\caption{A sample of Lucas-Eulerian numbers}
\label{table:1}
\end{minipage}
\end{table}

\noindent It immediately follows from Definition~\ref{de:LuEul} that the Lucas-Eulerian numbers are polynomials in $s$ and $t$ with nonnegative integer coefficients. Moreover, Table~\ref{table:1} indicates that they are palindromic. Note that for Lucasnomials $B_{\{n,k\}}$ and Lucas-Narayana numbers $N_{\{n,k\}}$, this property is immediate because they both have closed formulas from where it is easy to check that $B_{\{n,k\}}=B_{\{n,n-k\}}$ and $N_{\{n,k\}}=N_{\{n,n-k+1\}}$. 
 
 \begin{thm} The Lucas-Eulerian numbers are palindromic.
 \end{thm}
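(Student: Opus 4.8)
The plan is to prove, by induction on $n$, the symmetry $E_{\{n,k\}} = E_{\{n,n-k\}}$ for all $0 \le k \le n$, which is precisely the palindromic property suggested by Table~\ref{table:1}. The base cases $n=0$ and $n=1$ are immediate from the initial conditions, and for every $n$ the two boundary entries $k=0$ and $k=n$ of the row are likewise handled directly, since $E_{\{n,0\}}=1=E_{\{n,n\}}$. So the only substantive work lies in the interior of each row.

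For the inductive step I would fix $n \ge 2$, assume the symmetry $E_{\{n-1,j\}}=E_{\{n-1,(n-1)-j\}}$ for row $n-1$, and take an interior index $1 \le k \le n-1$. The key computation is to expand $E_{\{n,n-k\}}$ via the defining recurrence~\eqref{eq:LuEul-rec}, substituting $n-k$ for $k$:
\begin{equation*}
E_{\{n,n-k\}} = \{n-k+1\}E_{\{n-1,n-k\}} + \{k+1\}E_{\{n-1,n-k-1\}}\,.
\end{equation*}
Applying the inductive hypothesis to each Eulerian factor gives $E_{\{n-1,n-k\}}=E_{\{n-1,k-1\}}$ and $E_{\{n-1,n-k-1\}}=E_{\{n-1,k\}}$. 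Substituting these back yields
\begin{equation*}
E_{\{n,n-k\}} = \{k+1\}E_{\{n-1,k\}} + \{n-k+1\}E_{\{n-1,k-1\}}\,,
\end{equation*}
which is exactly the right-hand side of~\eqref{eq:LuEul-rec} defining $E_{\{n,k\}}$. Hence $E_{\{n,n-k\}}=E_{\{n,k\}}$, completing the induction.

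The whole argument rests on a single clean observation: under the substitution $k \mapsto n-k$ the two Lucas coefficients $\{k+1\}$ and $\{n-k+1\}$ interchange, and this interchange is matched exactly by the swap of the two Eulerian terms forced by the palindrome of row $n-1$. Consequently I do not expect any genuine obstacle; the only care required is index bookkeeping, namely checking that for $1 \le k \le n-1$ all four subscripts appearing above, $k$, $k-1$, $n-k$, and $n-k-1$, lie in the valid range from $0$ to $n-1$ for row $n-1$, so that the inductive hypothesis legitimately applies. Since this range check is routine, the proof reduces entirely to the two displayed lines above.
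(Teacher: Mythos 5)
Your proof is correct and takes essentially the same approach as the paper: the paper's terse argument that the recurrence~\eqref{eq:LuEul-rec} is invariant under the substitution $k\mapsto n-k$ (so the reversed row satisfies the same recursion with the same symmetric initial conditions) is exactly your key observation, with the induction on $n$ and the index bookkeeping spelled out explicitly. Your inductive step and range checks are all accurate, so there is nothing to fix.
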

 
 \begin{proof} This is an immediate consequence of the bijection $k\mapsto n-k=k'$, which amounts to write
 \begin{equation*}
 E_{\{n,n-k\}} = \{n-k+1\} E_{\{n-1,n-k\}} + \{n-n+k+1\} E_{\{n-1,n-k-1\}}
 \end{equation*}
 as
 \begin{equation*}
 E_{\{n,k'\}} = \{k'+1\} E_{\{n-1,k'\}} + \{n-k'+1\} E_{\{n-1,k'-1\}}\,.
 \end{equation*}
 \end{proof}
 
Unlike Binomial and Narayana numbers, whose Lucas analogues have straightforward generalizations of their explicit closed formulas, there is not an equivalent formula known for the classical Eulerian numbers; that is, in terms of a quotient of products of nonnegative integers. There is a formula in terms of alternating sums though, but unfortunately the natural extension of the Lucas recurrence to negative numbers does not give a Lucas analogue of such formula that be polynomial in $s$ and $t$ with nonnegative integer coefficients. We elaborate more on this observation in Section~\ref{se:3}. 
 
Using Definition~\ref{de:LuEul} repeatedly we get an interesting formula for $E_{\{n,1\}}$. We have
 \begin{equation}\label{eq:LuEul-1stcolumn}
 E_{\{n,1\}} = \{2\} E_{\{n-1,1\}} + \{n\} = \sum_{j=0}^n \{n-j\} \{2\}^j\,.
 \end{equation}
 Expanding out the sum in Formula~\eqref{eq:LuEul-1stcolumn}, and replacing  $\{2\}$ with $x$, we get the polynomial 
 \begin{equation*}
 \big\{A_{n,1}\big\}(x) = \{n\} x^0 + \{n-1\} x^{1} + \cdots + \{1\} x^{n-1} + \{0\} x^n \,,
 \end{equation*}
 which is the Lucas version of the polynomial $A_{n,1}(x)=x^nP_{n,1}\left(\frac{1}{x}\right)$, where 
 \begin{equation*}
 P_{n,1}(x) = \left(x\frac{d}{dx}\right)\left(\frac{1-x^{n+1}}{1-x}\right) = x+2x^2+\cdots+nx^n\,.
 \end{equation*} 
 Therefore, we have $E_{\{n,1\}}=\big\{A_{n,1}\big\}(\{2\})$. Note that when $n+1$ is prime, the polynomials 
 \begin{equation*}
 \varphi_{n}(x)=1+x+x^2+\cdots+x^n = \frac{1-x^{n+1}}{1-x}\,,
 \end{equation*}
 are the cyclotomic polynomials $\Phi_{n+1}(x)$ used in the definition of Lucas atoms~\cite{ST_20}. They provide a convenient factorization of $\{n\}$. Recall also that there is a well-known formula that generates the classical Eulerian polynomials, here denoted by $E_n(x)$, in terms of the operator $x\frac{d}{dx}$ and the geometric series $\frac{1}{1-x}$; namely
 \begin{equation*}
\left(x\frac{d}{dx}\right)\left(\frac{1}{1-x}\right) = \frac{E_n(x)}{(1-x)^{n+1}}\,.
 \end{equation*}
 It is then reasonable to think that a general formula for $E_{\{n,k\}}$ may be given by the Lucas version of a polynomial obtained by applying the operator $x\frac{d}{dx}$ to a suitable transformation of $\varphi_n(x)$.
 
 \medskip
   
 \noindent\textbf{Conjecture.} The Lucas-Eulerian numbers are given by $E_{\{n,k\}}=\big\{A_{n,k}\big\}(\{k+1\})$, where $\big\{A_{n,k}\big\}$ is the Lucas version of the polynomial $A_{n,k}(x)$ defined as
 \begin{equation*}
 A_{n,k}(x) =  \left(x\frac{d}{dx}\right) \Big(T(\varphi_n(x))\Big)\,,
 \end{equation*}
 where $T$ is a suitable transformation of $\varphi_n(x)$.

 \section{On an alternating sum formula for Eulerian numbers}
 \label{se:3}
 
 For any integers $0\le k< n$, setting $E_{n,n}=1$ for all $n\ge 0$ and $E_{n,k}=0$ for $n<k$, a classical formula that holds for Eulerian numbers is
 \begin{equation}\label{eq:Eul-sum}
 E_{n,k} = \sum_{j=0}^{k+1} (-1)^{k+1-j} \binom{n+2}{k+1-j} j^{n+1} \,.
 \end{equation}
 A natural question that comes to mind is what the Lucas analogue of this formula should be. An immediate answer is to replace binomial coefficients and powers of nonnegative integers with their Lucas versions. But what is the Lucas version of $-1$? 
 
 \begin{table}[h]
\footnotesize
\begin{minipage}{0.45\textwidth}
\centering
\ra{1.2}
\begin{tabular}{c c}
\toprule
 $\bm{n\backslash k}$ & ${\color{red}1}$ \\
\midrule
\color{red}0 & 0  \\
\color{red} 1& 1 \\
\color{red}2 & $s^3+s^3t++2st^2$ \\
\color{red}3 & $s^4+s^4t+3s^2t^2+t^3$  \\
\color{red}4 & $s^5+s^5t+4s^3t^2+3st^3$  \\
\color{red}5 & $s^6+s^6t+5s^4t^2+6s^2t^3+t^4$  \\
\bottomrule
\end{tabular}
\caption{$E'_{\{n,1\}}$}
\label{table:2}
\end{minipage}%
\begin{minipage}{0.45\textwidth}
\centering
\ra{1.2}
\begin{tabular}{c c}
\toprule
 $\bm{n\backslash k}$ & ${\color{red}1}$ \\
\midrule
\color{red}0 & 0  \\
\color{red} 1& 1 \\
\color{red}2 & $2s+s^3+\frac{s^3}{t}$ \\
\color{red}3 & $3s^2+t+s^4+\frac{s^4}{t}$  \\
\color{red}4 & $4s^3+3st+s^5+\frac{s^5}{t}$  \\
\color{red}5 & $5s^4+6s^2t+t^2+s^6+\frac{s^6}{t}$  \\
\bottomrule
\end{tabular}
\caption{$E''_{\{n,1\}}$}
\label{table:3}
\end{minipage}
\end{table}
 
\noindent We have seen in Section~\ref{se:1} that for convenient recursive formulas that hold for Binomial and Narayana numbers, using Lucasnomials and replacing $-1$ with $t$ was enough to get consistent formulas for their Lucas analogues. Hence, let us do the same here. Define the Lucas number
 \begin{equation*}
 E'_{\{n,k\}} = \sum_{j=0}^{k+1} t^{k+1-j} \begin{Bmatrix}n+2 \\ k+1-j\end{Bmatrix} \{j\}^{n+1} \,.
 \end{equation*}
 We can easily check that $E_{\{n,k\}}\neq E'_{\{n,k\}}$ (compare Table~\ref{table:1} with Table~\ref{table:2}). Another plausible Lucas version of formula~\eqref{eq:Eul-sum} is obtained by extending the recurrence~\eqref{eq:Lucas-seq} to negative integers. In particular, we have $\{-1\}=\frac{1}{t}$ so that we get the Lucas number
\begin{equation*}
E''_{\{n,k\}} = \sum_{j=0}^{k+1} \left(\tfrac{1}{t}\right)^{k+1-j} \begin{Bmatrix}n+2 \\ k+1-j\end{Bmatrix} \{j\}^{n+1} \,.
 \end{equation*}
Once more, we have $E''_{\{n,k\}}\neq E_{\{n,k\}}$ (compare Table~\ref{table:1} with Table~\ref{table:3}). This time though, we see that $E''_{\{n,1\}}$ and $E_{\{n,1\}}$ are almost equal except for an extra term $s^{n+1}+\frac{s^{n+1}}{t}$ for $n\ge 2$.

\noindent We wonder what the Lucas version of the alternating sum formula~\eqref{eq:Eul-sum} is that be compatible with our Lucas analogue of Eulerian numbers.

 \section{Further comments}
 \label{se:4}

We expect to find an uncomplicated formula for Lucas-Eulerian numbers in general by using induction. It is not easy to obtain a formula for $E_{\{n,k\}}$ similar to the one given for $E_{\{n,1\}}$, as stated in the conjecture. This difficulty is due in part to the fact that Lucas generalized numbers are neither additive nor multiplicative, for instance $\{2\}+\{3\}\neq\{5\}$ and $\{2\}\{3\}\neq\{6\}$. Moreover, Lucas-Eulerian numbers are not always factorized in terms of Lucas atoms (we refer to \cite{ST_20} for definitions and notations). Try for example to factorize $E_{\{3,1\}}=3s^2+t$. Nevertheless, we can write $E_{\{3,1\}}$ as a linear combination of Lucas atoms  as follows
\begin{equation*}
3s^2+t = 4(s^2+t)-(s^2+3t)=4P_3(s,t) - P_6(s,t)\,.
\end{equation*}

Despite the elementary way of defining Lucas-Eulerian numbers, we have been unable to find a combinatorial interpretation for them. We hope that the reader will be tempted to find such an interpretation. 

The recursive approach used here to define Lucas-Eulerian numbers can be used to define Lucas versions of a vast collection of combinatorial constants for which recursive relations are known. For instance, the Lucas analogue of the Stirling numbers $St2_{n,k}$ of the second kind are given by
\begin{equation}\label{eq:LuSt2}
St2_{\{n,k\}} = \{k\} St2_{\{n-1,k\}} + St2_{\{n-1,k-1\}}\,,
\end{equation}
with the usual initial conditions $St2_{\{0,0\}}=1$, $St2_{\{n,k\}}=1$ for $n=k$ or $k=1$, $St2_{\{j,0\}}=0$ for $j>0$, and $St2_{\{n,k\}}=0$ for $n<k$. 

\begin{table}[h!]
\tiny
\begin{minipage}{\textwidth}
\centering
\ra{1.2}
\begin{tabular}{c c c c c c p{0.8cm} p{0.8cm}}
\toprule
 $\bm{n\backslash k}$ & ${\color{red}0}$ & ${\color{red}1}$ & ${\color{red}2}$ & ${\color{red}3}$ & ${\color{red}4}$ & ${\color{red}5}$  \\
\midrule
\color{red}0 & 1 & 0 & 0 & 0 & 0 & 0 \\
\color{red} 1 & 0 & 1 & 0 & 0 & 0 & 0 \\
\color{red}2 & 0 & $1$ & $1+s$ & 0 & 0 & 0 \\
\color{red}3 & 0 & $1$ & $1+s+s^2$ & 1 & 0 & 0 \\
\color{red}4 & 0 & $1$ & $1+s+s^2+s^3$ & $1+s+s^2+t$ & 1 & 0 \\
\color{red}5 & 0 & $1$ & $1+s+s^2+s^3+s^4$ & $1+s+2s^2++s^3+s^4+t+st+2s^2t+t^2$ & $1+s+s^2+s^3+t+2st$ & 1 \\
\bottomrule
\end{tabular}
\caption{A sample of Lucas-Stirling numbers of the second kind}
\label{table:4}
\end{minipage}
\end{table}

\noindent It follows from~\eqref{eq:LuSt2} that the Lucas-Stirling numbers of the second kind are polynomials in $s$ and $t$ with nonnegative integer coefficients. Table~\ref{table:4} shows some of them. We are not aware of any work on these numbers in the literature.

\noindent Here is another enlightening example; for $n\ge 2$ and with initial conditions $M_0=M_1=1$, the standard Motzkin numbers $M_n$ are known to satisfy the following recursive formula, 
\begin{equation}\label{eq:Mo}
M_{n} = \frac{2n+1}{n+2} M_{n-1} + \frac{3n-3}{n+2} M_{n-2}\,.
\end{equation}

\noindent The natural Lucas analogue of relation~\eqref{eq:Mo} is

\begin{equation}\label{eq:LuMo}
M_{\{n\}} = \frac{\{2n+1\}}{\{n+2\}} M_{\{n-1\}} + \frac{\{3n-3\}}{\{n+2\}} M_{\{n-2\}}\,,
\end{equation}
with initial conditions $M_{\{0\}}=M_{\{1\}}=1$. Formula~\eqref{eq:LuMo} gives rational expressions in $s$ and $t$ for $M_{\{n\}}$ in general. For instance, 
\begin{equation*}
M_{\{2\}} = \frac{s^4+3s^2t+t^2+s^2+t}{s^3+2st}\,.
\end{equation*}
On the other hand, Motzkin numbers also satisfy a formula in terms of binomials coefficients and Catalan numbers, whose straightforward Lucas analogue is
\begin{equation}\label{eq:LuMoBinCat}
M_{\{n\}} = \sum_{k=0}^{\lfloor n/2\rfloor} \begin{Bmatrix}n\\2k\end{Bmatrix} C_{\{k\}}\,,
\end{equation}
where $C_{\{k\}} = {\tifrac{1}{k+1}}{\tiny\begin{Bmatrix}2k \\ k\end{Bmatrix}}$. Since Lucasnomials and Lucas-Catalan numbers are polynomials in $s$ and $t$ with nonnegative integer coefficients~\cite{Bennett_et_al_20}, so are the Lucas-Motzkin numbers by Formula~\eqref{eq:LuMoBinCat}.  This apparent contradiction with \eqref{eq:LuMo} is due to the fact that the recursive formula used to define the Lucas numbers $M_{\{n\}}$ is not the right one. In the same way we did in Section~\ref{se:1}, by using a recursive relation different from the common one that holds for binomial numbers, here we have to pick a different recursive relation whose Lucas analogue be compatible with Formula~\eqref{eq:LuMoBinCat}.

In ongoing work, we elaborate more on this observation~\cite{Agapito_Rio23}.

\end{document}